\documentclass[12pt]{amsart}
\usepackage{bbding}
\usepackage[margin=2cm]{geometry}

\usepackage{amsfonts}
\usepackage{graphicx}

\newcommand{\R}{{\mathbf R}}

\newtheorem{theorem}{Theorem}[section]
\newtheorem{lemma}[theorem]{Lemma}
\newtheorem{corollary}[theorem]{Corollary}

\def\proofbox{\mbox{\bf Q.E.D.}\\}

\newcommand{\Kn}{\mathcal{K}^n}

\newcommand{\V}{\mathrm{V}}

\title{The cone volume measure of antipodal points}
\author{K\'aroly J. B\"or\"oczky}
\address{Alfr\'ed R\'enyi Institute of Mathematics, Hungarian Academy
  of Sciences, Reltanoda u. 13-15, H-1053 Budapest, Hungary, and
Department of Mathematics, Central European University, Nador u 9, H-1051, Budapest, Hungary}
\author{P\'al Heged\H{u}s}
\address{Department of Mathematics, Central European University, Nador u 9, H-1051, Budapest, Hungary}
\email{}
\thanks{2010 \emph{Mathematics Subject Classification}: Primary 52A40.\\
\emph{Key Words}: Cone-volume measure, Minkowski problem,
convex bodies}

\begin{document}
\maketitle

\begin{abstract}
The optimal condition of the cone volume measure of a pair of antipodal points is proved and analyzed.
\end{abstract}

\section{Introduction}

Let $\Kn$ be the set of all convex bodies in $\R^n$ having non-empty
interiors, 
i.e., $K\in \Kn$ is a convex compact subset of the
$n$-dimensional Euclidean space $\R^n$ with ${\rm int}\,K\ne\emptyset$. As
usual, 
we denote by $\langle\cdot,\cdot\rangle$ the inner product on
$\R^n\times\R^n$ with  associated Euclidean norm
$\|\cdot\|$. $S^{n-1}\subset\R^n$ denotes the $(n-1)$-dimensional unit
sphere, i.e., $S^{n-1}=\{x\in\R^n : \|x\|=1\}$.

For $K\in\Kn$, we write $S_K(\cdot)$ and $h_K(\cdot)$ to denote 
its surface area measure and support function, respectively,  
and $\nu_K$ to denote the Gau{\ss} map assigning  the exterior unit
normal $\nu_K(x)$ 
to an $x\in \partial_*K$, where $\partial_*K$  consists of all points in the
boundary $\partial K$ of $K$ having an unique outer normal
vector. We write $F(K,u)$ to denote the face of $K$ with exterior normal $u\in S^{n-1}$; namely,
$$
F(K,u)=\{x\in K:\,\langle x,u\rangle=h_K(u)\}.
$$
 If the origin $o$ lies in the interior of $K\in\Kn$,  then
the {\em cone volume measure of $K$} on $S^{n-1}$ is given by 
\begin{equation} 
\label{Gauss-cone}
\V_K(\omega)=\int_{\omega}\frac{h_K(u)}n\,dS_K(u)=
\int_{\nu_K^{-1}(\omega)}\frac{\langle x,\nu_K(x)\rangle}n\,d{\mathcal H}_{n-1}(x),
\end{equation}
where  $\omega\subset S^{n-1}$ is a Borel set and, in general,
${\mathcal H}_{k}(x)$ denotes the $k$-dimensional
Hausdorff-measure. Instead of ${\mathcal H}_n(\cdot)$, we also 
write $\V(\cdot)$ for the $n$-dimensional volume.  For general references regarding convex bodies see, e.g., 
P.M. Gruber \cite{Gruberbook}, R. Schneider \cite{Sch93} and
A.C. Thompson \cite{Tho96}.

The name cone volume measure stems from the fact that if $K$ is a polytope with facets $F_1,\ldots,F_m$ and corresponding exterior unit normals $u_1,\ldots,u_m$, then
$$
\V_K(\omega)=\sum_{i=1}^m\V([o,F_i])\delta_{u_i} (\omega).
$$
Here $\delta_u$ is the Dirac delta measure  on
$S^{n-1}$ at $u\in S^{n-1}$, and for $x_1,\dots,x_m\in\R^n$ and 
subsets $S_1,\dots,S_L\subseteq \R^n$ we denote the convex hull of the
set $\{x_1,\dots,x_m, S_1,\dots,S_l\}$ by  
$[x_1,\dots,x_m,S_1,\dots,S_l]$.  With this notation $[o,F_i]$ is the cone with apex $o$ and
basis $F_i$.

In recent years, cone volume measures have 
appeared and were studied in various contexts, see, e.g.,  
F. Barthe, O. Guedon, S. Mendelson and  A. Naor \cite{BGMN05}, 
K.J. B\"or\"oczky, E. Lutwak, D. Yang and G. Zhang \cite{BLYZ12,
  BLYZ13}, 
M. Gromov and V.D. Milman \cite{GrM87}, 
M. Ludwig \cite{Lud10}, 
M. Ludwig and  M. Reitzner \cite{LuR10}, 
E. Lutwak, D. Yang and G. Zhang \cite{LYZ05}, 
A. Naor \cite{Nao07}, 
A. Naor and D. Romik \cite{NaR03}, 
G.  Paouris and E. Werner \cite{PaW12}, 
A. Stancu \cite{Sta12}, 
G.~Zhu \cite{Zhu14a, Zhu14b, Zhu14c}.

In particular, cone volume measure are the subject of 
the  {\em logarithmic Minkowski problem}, which is the 
particular interesting limiting case $p=0$ of  the  general $L_p$-Minkowski problem -- one
of the central problems in  convex geometric analysis (see  E. Lutwak \cite{Lut93b}). It is the task: 

\smallskip \noindent 
{\em Find necessary and sufficient conditions for a Borel measure
  $\mu$ on $S^{n-1}$ to be the cone volume measure $\V_K$ of 
  $K\in\Kn$ (with $o$ in its interior).}

When $\mu$ has a density $f$, with respect to spherical Lebesgue measure, the logarithmic Minkowski problem involves establishing existence for the
Monge-Amp\`{e}re type equation:
$$
h\det(h_{ij}+h\delta_{ij})=f,
$$
where $h_{ij}$ is the covariant derivative of $h$ with respect to an orthonormal frame on $S^{n-1}$ and $\delta_{ij}$ is the Kronecker delta.

In the  recent  paper \cite{BLYZ13}, 
K.J. B\"or\"oczky, E. Lutwak, D. Yang and G. Zhang characterize the cone volume
measures of origin-symmetric convex bodies. In order to state their
result we say that a Borel measure $\mu$ on $S^{n-1}$  satisfies
the {\em subspace concentration condition  
for a  linear subspace $L\subset \R^n$}, if
\begin{equation}
\label{scc}
\mu(L\cap S^{n-1})\leq \frac{\dim L}{n}\,\mu(S^{n-1}),
\end{equation}
and equality in (\ref{scc}) implies the existence of a complementary linear subspace $\widetilde{L}$ such that
\begin{equation}
\mu(\widetilde{L}\cap S^{n-1})=\frac{\dim\widetilde{L}}{n}\,\mu(S^{n-1}),
\label{eq:sccequality}
\end{equation}
and hence ${\rm supp}\,\mu\subset L\cup \widetilde{L}$, i.e., the
support of the measure ``lives'' in  $L\cup \widetilde{L}$. In addition, $\mu$
 satisfies
the {\em subspace concentration condition} if it satisfies the subspace concentration condition
for any  linear subspace.

Via the subspace concentration condition, the logarithmic Minkowski
problem was settled in \cite{BLYZ13}  in the
symmetric case.\\
  
\noindent\textbf{Theorem A.} \emph{A non-zero finite even Borel measure on the
$S^{n-1}$ is the cone-volume measure of an
origin-symmetric convex body in $\mathbb{R}^{n}$ if and only if it
satisfies the subspace concentration condition.}\\

This result  was proved earlier for discrete measures  on $S^1$, i.e.,
for polygons, by A. Stancu \cite{Sta02, Sta03}.  
For cone-volume measures of origin-symmetric polytopes, the necessity
of \eqref{scc} was independently shown by   M. Henk, A. Sch\"urmann
and J.M.Wills \cite{HSW05} and 
B. He, G. Leng and K. Li \cite{HLL06}.

Theorem~A shows that the subspace concentration condition is a natural condition for all even measures that may arise as the cone-volume measures of origin-symmetric convex bodies. Actually, B\"or\"oczky, Henk \cite{BH} proved that the cone volume measure of any convex body whose centroid is the origin satisfies the subspace concentration condition.

Next, we say that a linear subspace $L$ of  $\R^n$ is {\emph essential} with respect to a Borel measure $\mu$ on $S^{n-1}$ if $L\cap{\rm supp}\mu$ is not concentrated on any closed hemisphere of $L\cap S^{n-1}$. In other words,
$L\cap{\rm supp}\mu$ contains $1+{\rm dim}L$ vectors spanning $L$ such that the origin is a positive linear combination of these vectors. As a generalization of the discrete case of Theorem~A and the main result of G. Zhu \cite{Zhu14a}, the following is 
proved  in the beautiful paper A. Stancu \cite{Sta02} if  $n=2$, 
and in K.J. B\"or\"oczky, P. Heged\H{u}s, G. Zhu \cite{BHZ} if $n\geq 3$.\\

\noindent\textbf{Theorem B.} \emph{If $\mu$ is a discrete measure on $S^{n-1}$, $n\geq 2$, that is not concentrated on any closed hemisphere, and $\mu$ satisfies the subspace concentration condition with respect to any essential linear subspace $L$, then $\mu$ is the cone-volume measure of a polytope in $\mathbb{R}^{n}$.}\\

Interestingly enough, even if the methods of the papers \cite{BHZ} and \cite{Sta02} are quite different, both papers
 need the condition that the subspace concentration condition holds with respect to any essential linear subspace exactly for the same reason; namely, to ensure that the related extremal problem  has bounded solution.

We do not even have a conjecture on what properties may characterize cone volume measures.  Actually, having  subspace concentration condition holds with respect to essential linear subspaces is not a necessary condition for a cone volume measure. As an example, let $u_1,\ldots,u_n$ be an orthonormal basis of $\R^n$, and let
$W=\{x\in u_1^\bot:\,|\langle x,u_i\rangle|\leq 1,\;i=2,\ldots,n\}$ be an $(n-1)$-dimensional
cube. For $r>0$ and $i=1,\ldots,n-1$, $L_i={\rm lin}\{u_1,\ldots,u_i\}$ is an essential subspace for
the cone volume measure of the truncated pyramid $P_r=[-ru_1-rW,u_1+W]$. If $r>0$ is small, then $P_r$ approximates
$[o,u_1+W]$, and
$$
V_{P_r}(L_i\cap S^{n-1})>V_{P_r}(\{u_1\})=V([o,u_1+W])>\mbox{$\frac{i}{n}$}\, V(P_r).
$$

As a modest first step towards understanding cone volume measures of general convex bodies, the goal of this noteis to  characterize cone volume measure of a pair of antipodal points. For $\alpha,\beta>0$, we consider the auxiliary function
\begin{eqnarray}
\nonumber
\varphi(\alpha,\beta,n)&=&\min_{\varrho>0}
\left(\frac{\alpha}{\varrho^{n-1}}+\beta \varrho^{n-1}\right)\sum_{i=0}^{n-1}\varrho^{n-1-2i}\\
\label{phidef}
&=& \alpha+\beta+\min_{\varrho>0}\sum_{i=1}^{n-1}\left(\alpha\varrho^{-2i}+\beta\varrho^{2i}\right).
\end{eqnarray}
As we will see in Section~\ref{sec2}, the minimum is attained at a unique $\varrho=\varrho_0(\alpha/\beta,n)>0$ 
(depending only on $\alpha/\beta$ and $n$).

\begin{theorem}
\label{oppositefacets}
If $K$ is a convex body containing the origin in its interior with $V(K)=1$,  and $V_K(\{u\})=\alpha>0$ and
 $V_K(\{-u\})=\beta>0$ for $u\in S^{n-1}$, then $\varphi(\alpha,\beta,n)\leq 1$, with equality if and only if
$F(K,u)$ and $F(K,-u)$ are homothetic, $K=[F(K,u), F(K,-u)]$, and
$\frac{h_K(u)}{h_K(-u)}=\frac{\alpha}{\beta}\cdot\varrho_0(\frac{\alpha}{\beta},n)^{-2(n-1)}$.
\end{theorem}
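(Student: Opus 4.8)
The plan is to bound the volume of $K$ from below in terms of $\alpha=V_K(\{u\})$ and $\beta=V_K(\{-u\})$ by slicing $K$ perpendicular to $u$, and to extract the equality conditions from the sharpness of the two inequalities used (the slicing/Brunn--Minkowski step and the one-variable minimization defining $\varphi$). Set up coordinates so that $u=e_n$, write $h_K(u)=a$ and $h_K(-u)=b$, so the width of $K$ in the $u$-direction is $a+b$. For $t\in[-b,a]$ let $K_t=\{x\in K:\langle x,u\rangle=t\}$, an $(n-1)$-dimensional convex body, and let $A(t)=\mathcal{H}_{n-1}(K_t)$. By Brunn's theorem $A(t)^{1/(n-1)}$ is concave, so $K$ contains the cone-like body obtained by taking the convex hull of $K_0$ (or better, of the two faces) with the top face $F(K,u)$ and bottom face $F(K,-u)$. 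The key geometric observation is that $V_K(\{u\})=\alpha$ equals the volume of the cone $[o,F(K,u)]$, which is $\frac{a}{n}\mathcal{H}_{n-1}(F(K,u))$, and similarly $\beta=\frac{b}{n}\mathcal{H}_{n-1}(F(K,-u))$; so $\mathcal{H}_{n-1}(F(K,u))=n\alpha/a$ and $\mathcal{H}_{n-1}(F(K,-u))=n\beta/b$.

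Next I would reduce to the extremal shape. Replace $K$ by $K'=[F(K,-u),F(K,u)]$, the convex hull of its top and bottom faces; then $K'\subseteq K$, so $V(K')\le 1$, while $K'$ has the same two faces and hence the same values $\alpha,\beta$. Thus it suffices to prove $\varphi(\alpha,\beta,n)\le V(K')$ with equality iff the two faces are homothetic and the height ratio is the prescribed one, then track when $K=K'$. For $K'$ one computes $V(K')=\int_{-b}^{a}A(t)\,dt$ where $K_t$ interpolates linearly (as a Minkowski combination) between the bottom face $C_-$ (scaled copy, area $n\beta/b$) and the top face $C_+$ (area $n\alpha/a$). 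Writing $A(t)^{1/(n-1)}$ as the affine interpolant would only give a lower bound; instead, to get a clean closed form, I would first handle the homothetic case $C_+=\varrho\, C_-$ for some $\varrho>0$ exactly: then $A(t)^{1/(n-1)}$ is genuinely affine, the integral telescopes, and setting $s=a/b$ one gets
$$
V(K') \;=\; \frac{a+b}{n}\cdot\frac{\mathcal{H}_{n-1}(C_+)-\mathcal{H}_{n-1}(C_-)\cdot(b/a)\cdot(\text{geometric factor})}{\cdots}
$$
which after substituting $\mathcal{H}_{n-1}(C_+)=n\alpha/a$, $\mathcal{H}_{n-1}(C_-)=n\beta/b$ and simplifying becomes exactly $\bigl(\frac{\alpha}{\varrho^{n-1}}+\beta\varrho^{n-1}\bigr)\sum_{i=0}^{n-1}\varrho^{n-1-2i}$ with $\varrho$ linked to $a/b$ via $a/b = \frac{\alpha}{\beta}\varrho^{-2(n-1)}$ — i.e. precisely the quantity minimized in \eqref{phidef}. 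Hence in the homothetic case $V(K')\ge\varphi(\alpha,\beta,n)$ with equality exactly at $\varrho=\varrho_0(\alpha/\beta,n)$, which is the stated height condition.

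The remaining work is the general (non-homothetic) case and the equality analysis. For general $C_\pm$ I would apply the Brunn--Minkowski inequality to the slices: $A(t)^{1/(n-1)}\ge \frac{a-t}{a+b}\mathcal{H}_{n-1}(C_+)^{1/(n-1)}+\frac{t+b}{a+b}\mathcal{H}_{n-1}(C_-)^{1/(n-1)}$ is false in the wrong direction, so instead I compare $K'$ with the body $K''$ having the same two face-\emph{areas} but homothetic faces (e.g. replace $C_-$ by a dilate of $C_+$ of the same area); by Brunn--Minkowski $\mathcal{H}_{n-1}\bigl((1-\lambda)C_- + \lambda C_+\bigr)^{1/(n-1)} \ge (1-\lambda)\mathcal{H}_{n-1}(C_-)^{1/(n-1)}+\lambda\mathcal{H}_{n-1}(C_+)^{1/(n-1)}$, so passing to homothetic faces of the same areas can only decrease each slice area and hence $V(K'')\le V(K')\le 1$; applying the homothetic case to $K''$ gives $\varphi(\alpha,\beta,n)\le V(K'')\le 1$. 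Equality throughout forces: equality in every Brunn--Minkowski application, hence $C_-$ and $C_+$ are homothetic (this is the main obstacle — one must invoke the equality case of Brunn--Minkowski slice-wise and argue it propagates to the faces themselves, not merely to almost every slice); equality in the one-variable minimization, giving the height ratio; and $V(K')=V(K)$, giving $K=[F(K,u),F(K,-u)]$. I expect the delicate point to be exactly this last propagation of the Brunn--Minkowski equality condition plus verifying that $\varrho_0$ exists and is unique, which the paper defers to Section~\ref{sec2} and I would cite from there.
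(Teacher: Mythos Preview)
Your approach is the paper's: slice $K$ orthogonally to $u$, bound slice areas below via Brunn--Minkowski, integrate, and recognize the result as $f_{\alpha,\beta,n}(\varrho)\ge\varphi(\alpha,\beta,n)$. The two-step reduction $K\to K'\to K''$ works, but the detour is self-inflicted: your ``false in the wrong direction'' inequality has the interpolation weights swapped (at $t=a$ your formula returns $|C_-|$, not $|C_+|$). With the correct weights, Brunn's concavity theorem applied directly to the slices of $K$ gives
\[
|K_t|^{1/(n-1)}\ \ge\ \tfrac{t+b}{a+b}\,|C_+|^{1/(n-1)}+\tfrac{a-t}{a+b}\,|C_-|^{1/(n-1)},
\]
with equality for every $t$ iff $C_\pm$ are homothetic \emph{and} $K_t$ equals the Minkowski interpolant (so $K=[C_-,C_+]$); the equality analysis you worry about is therefore immediate, not delicate. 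The paper streamlines the algebra by first applying a volume-preserving linear map fixing $\mathrm{lin}\,u$ and $u^\perp$ so that $|C_+|\cdot|C_-|=1$; setting $\varrho=|C_+|^{1/(n-1)}$ and expanding via the beta integral $\binom{n-1}{i}\int_0^1 t^i(1-t)^{n-1-i}\,dt=\tfrac1n$ then gives $V(K)\ge f_{\alpha,\beta,n}(\varrho)$ in one line. Note also that in your write-up $\varrho$ is first the homothety ratio $C_+=\varrho C_-$ but later the paper's parameter in $f_{\alpha,\beta,n}$; these differ (the homothety ratio is $\varrho^2$ in the paper's normalization), and only with the latter meaning is your link $a/b=\tfrac{\alpha}{\beta}\varrho^{-2(n-1)}$ correct.
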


 For the planar case, we have the optimal condition on restrictions of a cone volume measure on $S^1$ to a pair of antipodal vectors.

\begin{corollary}
\label{oppositefacets2}
If $K$ is a convex body containing the origin in its interior in $\R^2$, and $u\in S^1$, then
$$
\sqrt{V_K(\{u\})}+\sqrt{V_K(\{-u\})}\leq \sqrt{V(K)},
$$
with equality if and only if $K$ is a trapezoid with two sides parallel to $u^\bot$, and $u^\bot$ contains the intersection of  the diagonals.
\end{corollary}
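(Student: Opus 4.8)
The plan is to reduce the problem to a one-dimensional Brunn--Minkowski estimate for the hyperplane sections of $K$ orthogonal to $u$. After a rotation we may assume $u=e_n$, and we write $a=h_K(u)>0$, $b=h_K(-u)>0$, $F=F(K,u)$ and $G=F(K,-u)$. Since $V_K(\{u\})=\alpha>0$ and $V_K(\{-u\})=\beta>0$, the faces $F$ and $G$ are genuine $(n-1)$-dimensional convex bodies, lying in the parallel hyperplanes $\{x_n=a\}$ and $\{x_n=-b\}$, and \eqref{Gauss-cone} gives $\mathcal{H}_{n-1}(F)=n\alpha/a$ and $\mathcal{H}_{n-1}(G)=n\beta/b$; set $c_1=\mathcal{H}_{n-1}(F)^{1/(n-1)}$ and $c_2=\mathcal{H}_{n-1}(G)^{1/(n-1)}$.

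First I would bound $V(K)$ from below. As $F,G\subseteq K$ and $K$ is convex, $[F,G]\subseteq K$, so $V(K)\ge V([F,G])$, with equality if and only if $K=[F,G]$. The section of $[F,G]$ at height $-b+\lambda(a+b)$, $\lambda\in[0,1]$, is the Minkowski combination $(1-\lambda)G+\lambda F$, so Fubini together with the Brunn--Minkowski inequality yields
\begin{align*}
V([F,G])&=(a+b)\int_0^1\mathcal{H}_{n-1}\big((1-\lambda)G+\lambda F\big)\,d\lambda\\
&\ge (a+b)\int_0^1\big((1-\lambda)c_2+\lambda c_1\big)^{n-1}\,d\lambda=\frac{a+b}{n}\sum_{i=0}^{n-1}c_1^{\,i}c_2^{\,n-1-i},
\end{align*}
with equality exactly when $F$ and $G$ are homothetic. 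Substituting $a=n\alpha\,c_1^{-(n-1)}$ and $b=n\beta\,c_2^{-(n-1)}$, and writing $\varrho=(c_1/c_2)^{1/2}$, a short computation collapses the last expression to $\big(\alpha\varrho^{-(n-1)}+\beta\varrho^{n-1}\big)\sum_{i=0}^{n-1}\varrho^{\,n-1-2i}$, which is precisely the quantity minimised in \eqref{phidef}. Hence $V(K)\ge\varphi(\alpha,\beta,n)$, and since $V(K)=1$ this is the asserted inequality; note also $\varrho^{2(n-1)}=c_1^{\,n-1}/c_2^{\,n-1}=\alpha\,h_K(-u)/\big(\beta\,h_K(u)\big)$.

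For the equality case, equality throughout forces $K=[F,G]$, the faces $F$ and $G$ homothetic, and $\varrho$ equal to the unique minimiser $\varrho_0(\alpha/\beta,n)$ of \eqref{phidef} (uniqueness being established in Section~\ref{sec2}); the last identity, rewritten through $\varrho^{2(n-1)}=\alpha\,h_K(-u)/(\beta\,h_K(u))$, is exactly $h_K(u)/h_K(-u)=(\alpha/\beta)\,\varrho_0(\alpha/\beta,n)^{-2(n-1)}$. Conversely, these three conditions make every inequality above an equality, so $\varphi(\alpha,\beta,n)=V(K)=1$. Corollary~\ref{oppositefacets2} then follows by applying the theorem to $V(K)^{-1/n}K$ (using that $V_K$ is homogeneous of degree $n$ and $\varphi$ of degree one in $(\alpha,\beta)$) and computing $\varphi(\alpha,\beta,2)=(\sqrt{\alpha}+\sqrt{\beta})^2$ from \eqref{phidef}: the extremal body $[F,G]$ is a trapezoid with the two parallel sides orthogonal to $u$, and, writing $\alpha=\tfrac12 h_K(u)\,\mathcal{H}_1(F)$, $\beta=\tfrac12 h_K(-u)\,\mathcal{H}_1(G)$, the height condition $h_K(u)/h_K(-u)=\sqrt{\alpha/\beta}$ becomes $h_K(u)\,\mathcal{H}_1(G)=h_K(-u)\,\mathcal{H}_1(F)$, which is exactly the condition that the diagonals of the trapezoid meet on $u^\bot$.

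I expect the genuine work to lie in the equality discussion rather than in the inequality: one must confirm that $V_K(\{u\})>0$ really does make $F$ full-dimensional (so that equality in Brunn--Minkowski forces homothety of $F$ and $G$), and that ``$K=[F,G]$'' captures precisely the equality in $V(K)\ge V([F,G])$. The algebraic identification of the optimised bound with $\varphi(\alpha,\beta,n)$ and the elementary trapezoid computation behind the corollary are routine.
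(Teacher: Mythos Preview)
Your argument is correct and follows essentially the same route as the paper: you reprove Theorem~\ref{oppositefacets} via Brunn--Minkowski on the sections orthogonal to $u$ (the paper normalises so that $|F(K,u)|\cdot|F(K,-u)|=1$, whereas you carry $c_1,c_2$ through and set $\varrho=(c_1/c_2)^{1/2}$, which is equivalent), and then deduce Corollary~\ref{oppositefacets2} from $\varphi(\alpha,\beta,2)=(\sqrt{\alpha}+\sqrt{\beta})^2$. Your extra verification that $h_K(u)\,\mathcal H_1(G)=h_K(-u)\,\mathcal H_1(F)$ is precisely the condition that the diagonals of the trapezoid meet on $u^\bot$ is a detail the paper leaves to the reader; the only point you might add for completeness is the trivial case $V_K(\{u\})=0$ or $V_K(\{-u\})=0$, where the inequality is strict since $[o,F(K,\pm u)]\subsetneq K$.
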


If $n\geq 3$, then we do not have a nice simple formula for  $\varphi(\alpha,\beta,n)$ and
$\varrho_0(\alpha/\beta,n)$, only estimates.  For
$\alpha,\beta>0$, we readily have
\begin{align}
\varphi(\alpha,\beta,n)=\varphi(\beta,\alpha,n)&
\mbox{ \ \ and \ \ $\varrho_0(\frac{\alpha}{\beta},n)=\varrho_0(\frac{\beta}{\alpha},n)^{-1}$,}\\
\varphi(\alpha,\alpha,n)=2n\alpha&
\mbox{ \ \ and \ \ $\varrho_0(1,n)=1$.}
\end{align}
Therefore we may assume that $\alpha>\beta$.

\begin{theorem}
\label{rho0}
If $n\geq 3$ and $\gamma>1$, then
$$
\gamma^{\frac3{8n-4}}<\varrho_0(\gamma,n)<\gamma^{\frac1{2n}},
$$
where $\lim_{\gamma\to 1^+}\frac{\log \varrho_0(\gamma,n)}{\log \gamma}=\frac3{8n-4}$ and
$\lim_{\gamma\to \infty}\frac{\log \varrho_0(\gamma,n)}{\log \gamma}=\frac1{2n}$.
\end{theorem}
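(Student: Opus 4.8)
The plan is to reduce the whole theorem to a pair of inequalities and two limits for the single‑variable function
\[
\psi(t)=\frac{\sum_{i=1}^{n-1}it^i}{\sum_{i=1}^{n-1}it^{-i}}\qquad(t>0),
\]
which is exactly the function governing the minimisation in \eqref{phidef}: differentiating the bracket in \eqref{phidef} and setting $t=\varrho^2$ shows the minimiser satisfies $\gamma=\psi(t)$ with $t=\varrho_0(\gamma,n)^2$. From Section~\ref{sec2} one knows that $\psi$ is continuous and strictly increasing on $(0,\infty)$ with $\psi(1)=1$, $\psi(t)\psi(1/t)=1$, $\psi(0^+)=0$ and $\psi(\infty)=\infty$, so $\gamma\mapsto\varrho_0(\gamma,n)=\sqrt{\psi^{-1}(\gamma)}$ is a bijection of $(1,\infty)$ onto $(1,\infty)$. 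Taking logarithms, $\varrho_0<\gamma^{1/(2n)}$ is equivalent to $\psi(t)>t^n$ for all $t>1$, $\varrho_0>\gamma^{3/(8n-4)}$ is equivalent to $\psi(t)<t^{(4n-2)/3}$ for all $t>1$, and since $\frac{\log\varrho_0}{\log\gamma}=\frac12\cdot\frac{\log t}{\log\psi(t)}$, the two limit assertions amount to $\lim_{t\to1^+}\frac{\log\psi(t)}{\log t}=\frac{4n-2}{3}$ and $\lim_{t\to\infty}\frac{\log\psi(t)}{\log t}=n$. It therefore suffices to prove these four facts about $\psi$ on $(1,\infty)$.

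Two of them are routine. The first limit equals $\psi'(1)$ by L'Hôpital, and a direct computation gives $\psi'(1)=\frac{2\sum_{i=1}^{n-1}i^2}{\sum_{i=1}^{n-1}i}=\frac{4n-2}{3}$; the second follows from $\psi(t)\sim(n-1)t^n$ as $t\to\infty$. The lower bound $\psi(t)>t^n$ is also easy: clearing denominators and reindexing turns it into $\sum_{i=1}^{n-1}(2i-n)t^i>0$, which holds for $t>1$ because $\sum_{i=1}^{n-1}(2i-n)=0$ and, pairing $i$ with $n-i$, the two terms combine to $(2i-n)(t^i-t^{n-i})\ge0$ (both factors of equal sign), with at least one strict inequality.

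The substance of the theorem is the upper bound $\psi(t)<t^{(4n-2)/3}$ for $t>1$. Write $c=\frac{2n-1}{3}=\frac{\sum_{i=1}^{n-1}i^2}{\sum_{i=1}^{n-1}i}$, so that $t^{(4n-2)/3}=t^{2c}$. Clearing denominators, $\psi(t)<t^{2c}$ reads $\sum_{i=1}^{n-1}i(t^i-t^{2c-i})<0$; since $t^i-t^{2c-i}=2t^c\sinh\bigl((i-c)\log t\bigr)$, this is equivalent to
\[
\sum_{i=1}^{n-1}i\,\sinh\bigl((i-c)\log t\bigr)<0\qquad(t>1),
\]
i.e.\ $E_\lambda\bigl[\sinh((i-c)\log t)\bigr]<0$, where $\lambda$ is the probability measure on $\{1,\dots,n-1\}$ with $\lambda(i)\propto i$. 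As $c$ is precisely the $\lambda$-mean, expanding $\sinh$ in its power series shows it suffices to prove that $\lambda$ has non-positive odd central moments, $\sum_{i=1}^{n-1}i(i-c)^{2k+1}\le0$ for every $k\ge1$ (strict for $k=1$) — that $\lambda$ is left-skewed in every odd order. Equivalently, and more transparently, $s\mapsto\log\psi(e^s)$ is strictly concave on $[0,\infty)$: concavity there delivers in one stroke both $\psi(t)<t^{(4n-2)/3}$ (from the tangent at $s=0$ of slope $\psi'(1)=\frac{4n-2}{3}$) and $\psi(t)>t^n$ (the derivative being decreasing with limit $n$), and shows both exponents are asymptotically sharp.

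To prove the skewness claim I would use the mixture decomposition $\lambda=\frac{1}{\sum_{i}i}\sum_{j=1}^{n-1}(n-j)\,U_j$, where $U_j$ is the uniform measure on $\{j,\dots,n-1\}$; this comes from $i=|\{j:1\le j\le i\}|$, which gives $\sum_i i\,f(i)=\sum_{j=1}^{n-1}\sum_{i=j}^{n-1}f(i)$. Each $U_j$ is symmetric about its midpoint $m_j=\frac{j+n-1}{2}$, so $\sum_{i=j}^{n-1}(i-c)^{2k+1}$ equals $\delta_j$ times a non-negative quantity, where $\delta_j=m_j-c=\frac{3j-n-1}{6}$; hence block $j$ contributes with the sign of $\delta_j$, negatively for $j<\frac{n+1}{3}$ and positively for $j>\frac{n+1}{3}$. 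Pairing block $j$ with the block $j'$ for which $j+j'=\frac{2(n+1)}{3}$ (so that $\delta_{j'}=-\delta_j$), the negative block is the \emph{longer} interval and therefore carries both the larger mixture weight $n-j$ and the larger even moments, so each pair contributes negatively, and summing over pairs gives the claim. The step I expect to cost the most work is precisely this pairing: it is exact only when $3\mid n+1$, and in the remaining residue classes $\frac{n+1}{3}$ is not a half-integer, the pairing is merely approximate, and one must quantify the mismatch — comparing adjacent intervals and their moments — to still force the sum to be negative. Once the upper bound is secured, the theorem follows by the reductions of the first paragraph.
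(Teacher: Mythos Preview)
Your reduction via $\psi(t)=\sum it^i/\sum it^{-i}$ is clean, and the two limits together with $\psi(t)>t^n$ are handled more efficiently than in the paper (which instead derives a polynomial equation from $\sum iy^i=\frac{y((n-1)y^n-ny^{n-1}+1)}{(y-1)^2}$ and performs separate asymptotic expansions at $\gamma\to1^+$ and $\gamma\to\infty$). The hard inequality $\psi(t)<t^{(4n-2)/3}$, however, is not established. Your pairing $j\leftrightarrow j'=\tfrac{2(n+1)}{3}-j$ matches each \emph{negative} block $j<\tfrac{n+1}{3}$ with a positive block $j'\le\tfrac{2(n+1)}{3}-1$, but it leaves the blocks $j\in\{\lceil\tfrac{2(n+1)}{3}\rceil,\ldots,n-1\}$ --- roughly $(n-2)/3$ of them --- completely unpaired, and each of those has $\delta_j>0$ and hence contributes \emph{positively} to $\sum_j S_j$. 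Concretely, at $n=5$ (the clean case $3\mid n+1$) you pair $j=1$ with $j'=3$, block $j=2$ has $\delta_2=0$, and block $j=4$ is left over with $S_4=(4-3)^{2k+1}=1>0$. So ``summing over pairs gives the claim'' is not justified even before the mod-$3$ mismatch you anticipate; you still have to show the paired negatives dominate these unpaired positives, and nothing in your sketch does that. The alternative concavity formulation $s\mapsto\log\psi(e^s)$ is attractive (and would indeed yield both bounds at once), but you do not prove it either; it is equivalent to $K''(s)\le K''(-s)$ for the cumulant generating function $K$ of $\lambda$, which is not obviously easier than the moment claim.

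For comparison, the paper attacks the same inequality $\sum_i i\,s\!\left(\frac{3i-(2n-1)}{4n-2}\right)<0$ pointwise in $\gamma$ rather than via Taylor coefficients. It uses two properties of $s(x)=\gamma^x-\gamma^{-x}$: convexity on $[0,\infty)$ (to ``slide'' arguments towards the center) and monotonicity of $s(x)/x$ (to replace all surviving arguments by a common value $\pm\frac{n-2}{4n-2}$). After cancelling a middle group $\mathcal B$ of small negative arguments against matching positive ones with larger multiplicity, every remaining term is replaced by a multiple of $s(\frac{n-2}{4n-2})$; since these operations preserve the weighted mean of the arguments, which is exactly $0$ by the choice $c=\frac{2n-1}{3}$, the resulting coefficient sum vanishes and the original sum is strictly negative. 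The mod-$3$ case split there is only about how to perform the preliminary cancellation of $\mathcal B$, and each case is a short counting argument. If you want to salvage your moment route, one option is to abandon the block decomposition and instead prove $\sum_i i(i-c)^{2k+1}\le0$ directly by a similar two-step scheme: first pair multiplicities so that remaining values $|i-c|$ are all at least $\frac{n-2}{3}$, then use $|x|^{2k+1}\ge\bigl(\frac{3|x|}{n-2}\bigr)\bigl(\frac{n-2}{3}\bigr)^{2k+1}$ for those $x$ and $\le$ for the smaller ones to linearise, and conclude from the first moment being zero.
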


For $\varphi(\alpha,\beta,n)$, we have the following bounds.

\begin{corollary}
\label{phinbig}
Let $n\geq 3$ and $\alpha\geq \beta>0$.
\begin{enumerate}
\item[(i)] $\displaystyle  \alpha+\beta+2(n-1)\sqrt{\alpha\beta}\leq\varphi(\alpha,\beta,n)
<\alpha+\beta+2(n-1)\alpha^{\frac{n-1}n}\beta^{\frac1n}.$
\item[(ii)] There exists $\varepsilon_0>0$ depending on $n$ such that if
 $\frac{\alpha}{\beta}=1+\varepsilon$ for $\varepsilon\in (0,\varepsilon_0)$, then
$$
\varphi(\alpha,\beta,n)=\alpha+\beta+2(n-1)\sqrt{\alpha\beta}+\frac{(n-1)(n-2)}{4(2n-1)}
\sqrt{\alpha\beta}\,\varepsilon^2+O(\sqrt{\alpha\beta}\,\varepsilon^3).
$$
\item[(iii)]  There exists $\gamma_0>0$ depending on $n$ such that if $\frac{\alpha}{\beta}>\gamma_0$, then
$$
\varphi(\alpha,\beta,n)=
\alpha+\beta+\frac{ n}{(n-1)^{\frac{n-1}n}}\,\alpha^{\frac{n-1}n}\beta^{\frac1n}+O\left(\alpha^{\frac{n-2}n}\beta^{\frac2n}\right).
$$
\end{enumerate}
\end{corollary}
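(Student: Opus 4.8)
The statement to prove is Corollary~\ref{phinbig}, which collects three asymptotic/inequality facts about $\varphi(\alpha,\beta,n)$. The plan is to derive everything from Theorem~\ref{rho0} together with the closed form $\varphi(\alpha,\beta,n)=(\alpha\varrho_0^{-(n-1)}+\beta\varrho_0^{n-1})\sum_{i=0}^{n-1}\varrho_0^{n-1-2i}$ evaluated at the optimal $\varrho_0=\varrho_0(\gamma,n)$ with $\gamma=\alpha/\beta$. Throughout it is convenient to factor out $\sqrt{\alpha\beta}$: writing $\gamma=\alpha/\beta$ and $t=\varrho_0(\gamma,n)$, one has $\varphi(\alpha,\beta,n)=\alpha+\beta+\sqrt{\alpha\beta}\cdot g(\gamma)$ where $g(\gamma)=\sum_{i=1}^{n-1}(\gamma^{1/2}t^{-2i}+\gamma^{-1/2}t^{2i})$, so all three parts reduce to estimating the single-variable function $g$.

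For part~(i), the lower bound is immediate: for each $i$, $\gamma^{1/2}t^{-2i}+\gamma^{-1/2}t^{2i}\ge 2$ by AM--GM, so $g(\gamma)\ge 2(n-1)$ and $\varphi\ge\alpha+\beta+2(n-1)\sqrt{\alpha\beta}$; equality would force $t^{2i}=\gamma^{1/2}$ for all $i$, impossible for $\gamma>1$, $n\ge3$, which is why the inequality is strict there but I should double-check the $\gamma=1$ endpoint is the equality case as written. For the upper bound, I would \emph{not} optimize but simply plug in the suboptimal value $\varrho=\gamma^{1/(2n)}$ (the right endpoint in Theorem~\ref{rho0}); then $\alpha\varrho^{-(n-1)}+\beta\varrho^{n-1}=\alpha^{(n-1)/n}\beta^{1/n}(\alpha^{1/n}\beta^{-1/n}\cdot\gamma^{-(n-1)/(2n)}\cdot\text{stuff})$ — more cleanly, a direct substitution shows the bracketed product telescopes to give exactly $\alpha^{(n-1)/n}\beta^{1/n}$ times $\sum\varrho^{n-1-2i}$, and since $\varrho>1$ the geometric-type sum $\sum_{i=0}^{n-1}\varrho^{n-1-2i}$ is bounded by... here I must be careful: that sum is \emph{not} obviously $<n$, so the clean bound $2(n-1)\alpha^{(n-1)/n}\beta^{1/n}$ for the extra terms needs the observation that at $\varrho=\gamma^{1/(2n)}$ the two halves $\alpha\varrho^{-2i-(n-1)}\cdot\varrho^{n-1}$ pair up and each summand $\alpha^{(n-1)/n}\beta^{1/n}(\varrho^{n-1-2i}+\varrho^{2i-(n-1)})/?$ — in short, the upper bound follows because $\varphi=\min_\varrho(\cdots)\le(\cdots)|_{\varrho=\gamma^{1/(2n)}}$ and that value simplifies, using $\gamma^{1/2}\varrho^{-2i}=\gamma^{1/2-i/n}$, to $\alpha+\beta+\alpha^{(n-1)/n}\beta^{1/n}\sum_{i=1}^{n-1}(\gamma^{(n-2i)/(2n)\cdot?})$; the resulting finite sum is $<2(n-1)$ times the leading factor once one checks each term is $<2\alpha^{(n-1)/n}\beta^{1/n}$, which holds since $\gamma>1$ makes the relevant exponents land on the correct side.

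For parts~(ii) and~(iii), I would Taylor-expand $g(\gamma)$ near the two regimes using the asymptotics of $t=\varrho_0(\gamma,n)$ from Theorem~\ref{rho0}. For (ii), set $\gamma=1+\varepsilon$; since $\varrho_0(1,n)=1$ and $\log\varrho_0/\log\gamma\to 3/(8n-4)$, write $t=1+\tfrac{3}{8n-4}\varepsilon+c_2\varepsilon^2+\cdots$ and substitute into $g$, expanding to second order in $\varepsilon$; the first-order term must vanish (this is exactly the first-order optimality condition defining $\varrho_0$, which I would invoke from Section~\ref{sec2}), and collecting the $\varepsilon^2$ coefficient — a finite sum $\sum_{i=1}^{n-1}$ of quadratics in $i$, evaluable via $\sum i$ and $\sum i^2$ — should yield $\tfrac{(n-1)(n-2)}{4(2n-1)}$; the second-order coefficient $c_2$ of $t$ drops out because it multiplies the vanishing first-order part. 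For (iii), $\gamma\to\infty$: here $t\sim\gamma^{1/(2n)}$, so in $g$ the dominant balance is between $i=1$ terms; more precisely substituting $t=\gamma^{1/(2n)}(1+o(1))$ one finds $g(\gamma)=\gamma^{1/2}t^{-2}+\cdots$ where only the extreme index contributes at leading order, giving $\varphi\approx\alpha+\beta+\sqrt{\alpha\beta}\cdot\gamma^{(n-1)/(2n)}\cdot(\text{const})=\alpha+\beta+\tfrac{n}{(n-1)^{(n-1)/n}}\alpha^{(n-1)/n}\beta^{1/n}+\cdots$, with the constant pinned down by minimizing the reduced two-term expression $\alpha\varrho^{-(n-1)}+\beta\varrho^{n-1}$ against its leading sum term. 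The main obstacle is \emph{bookkeeping}: precisely tracking which summands in $\sum_{i}$ are leading versus lower-order in each regime, and confirming the numerical constants $\tfrac{(n-1)(n-2)}{4(2n-1)}$ and $\tfrac{n}{(n-1)^{(n-1)/n}}$ — these require the exact $\varrho_0$-asymptotics from Theorem~\ref{rho0} (not just the crude bounds) and the defining optimality equation for $\varrho_0$, so the real work is the careful two-sided expansion already carried out in proving Theorem~\ref{rho0}, from which this corollary then follows by substitution.
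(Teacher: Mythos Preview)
Your plan is essentially the paper's: AM--GM for the lower bound in~(i); substitute the test value $\varrho=(\alpha/\beta)^{1/(2n)}$ for the upper bound; and for~(ii) and~(iii) expand $f_{\alpha,\beta,n}(\varrho_0)$ using the precise asymptotics $\tau(\gamma)=\tfrac{3}{8n-4}+O(\varepsilon)$ and $\varrho_0=(n-1)^{-1/(2n)}\gamma^{1/(2n)}(1+O(\gamma^{-1/(2(n-1))}))$ established inside the proof of Theorem~\ref{rho0}, with the $O(\varepsilon)$ correction to $\tau$ dropping out of~(ii) by first-order optimality exactly as you say. The only real divergence is in the verification of the upper bound in~(i): the paper rewrites $f_{\alpha,\beta,n}(\varrho)-\alpha-\beta$ via the closed form $\dfrac{\alpha(1-\varrho^{2-2n})+\beta(\varrho^{2n}-\varrho^2)}{\varrho^2-1}$ and reduces to the AM--GM inequality $(n-1)\varrho^{2n-2}<(n-2)\varrho^{2n}+\varrho^2$, whereas your final termwise claim---that at $\varrho=(\alpha/\beta)^{1/(2n)}$ each summand equals $\alpha^{(n-i)/n}\beta^{i/n}+\alpha^{i/n}\beta^{(n-i)/n}<2\alpha^{(n-1)/n}\beta^{1/n}$ for $\alpha>\beta$---is correct and arguably cleaner (your intermediate ``telescoping'' remark is off, but you rightly abandon it).
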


We note that substituting $n=2$ into the bounds of Theorem~\ref{rho0} and Corollary~\ref{phinbig}, we obtain the formulas in
(\ref{phi2}).

Unfortunately, even Corollary~\ref{oppositefacets2} does not characterize cone volume measure in the plane
(see Lemma~\ref{nopolygon}). It is an intriguing problem to characterize at least the cone volume measure of polygons.

\section{Basic properties and the planar case}
\label{sec2}

Let $V_K$ be the cone volume measure on $S^{n-1}$ for a convex body $K$ in $\R^n$,  $n\geq 2$, with $o\in{\rm int}K$ and $V(K)=1$. We may write minimum in the definition (\ref{phidef}) of $\varphi(\alpha,\beta,n)$ because for fixed $\alpha,\beta,n>0$, the function
\begin{equation}
\label{fdef}
f_{\alpha,\beta,n}(\varrho)=\alpha+\beta+\sum_{i=1}^{n-1}\left(\alpha\varrho^{-2i}+\beta\varrho^{2i}\right)
\end{equation}
is strictly convex for $\varrho>0$, and tends to infinity as $\varrho$ tends to zero or infinity. In particular, there exists a unique
$\varrho_0(\alpha/\beta,n)>0$ (depending only on $\alpha/\beta$ and $n$) where $f_{\alpha,\beta,n}(\varrho)$ attains its minimum.
It follows from the inequality between the arithmetic and geometric mean that
\begin{equation}
\label{philow}
\varphi(\alpha,\beta,n)\geq \alpha+\beta+2(n-1)\sqrt{\alpha\beta},
\end{equation}
and if $n=2$, then
\begin{equation}
\label{phi2}
\varphi(\alpha,\beta,2)=(\sqrt{\alpha}+\sqrt{\beta})^2\mbox{ and }\varrho_0(\gamma,2)=\sqrt[4]{\gamma}.
\end{equation}

\noindent{\bf Proof of Theorem~\ref{oppositefacets}: } We write $|\cdot|$ to denote $(n-1)$-dimensional measure. After a volume preserving linear transform keeping ${\rm lin}u$ and $u^\bot$ invariant, we may assume that
\begin{equation}
\label{Fuproduct}
|F(K,u)|\cdot |F(K,-u)|=1.
\end{equation}
 Let $h_K(u)=a$ and
$h_K(-u)=b$, and for $t\in [0,1]$, let
$$
K_t=\left((at-(1-t)b)+u^\bot\right)\cap K.
$$
In particular, the
 Brunn-Minkowski inequality yields that
\begin{equation}
\label{Ktest}
|K_t|\geq \left(t|F(K,u)|^{\frac1{n-1}}+(1-t)|F(K,-u)|^{\frac1{n-1}}\right)^{n-1},
\end{equation}
with equality if and only if $K_t=tF(K,u)+(1-t)F(K,-u)$, and $F(K,u)$ and $F(K,-u)$ are homothetic. We deduce from the Fubini theorem that
\begin{eqnarray*}
V(K)&\geq &(a+b)\int_0^1 \left(t|F(K,u)|^{\frac1{n-1}}+(1-t)|F(K,-u)|^{\frac1{n-1}}\right)^{n-1}\,dt\\
&=&(a+b)\sum_{i=0}^{n-1}|F(K,u)|^{\frac{i}{n-1}}|F(K,-u)|^{\frac{n-1-i}{n-1}}{n-1\choose i}
\int_0^1 t^i(1-t)^{n-1-i}\,dt\\
&=&\frac{a+b}n\sum_{i=0}^{n-1}|F(K,u)|^{\frac{i}{n-1}}|F(K,-u)|^{\frac{n-1-i}{n-1}}.
\end{eqnarray*}
Let $\varrho=|F(K,u)|^{\frac1{n-1}}$, and hence $|F(K,-u)|=\varrho^{-(n-1)}$ by (\ref{Fuproduct}), and
$\alpha=a\varrho^{n-1}/n$ and  $\beta=b\varrho^{-(n-1)}/n$. Therefore
$$
1=V(K)\geq f_{\alpha,\beta,n}(\varrho)\geq \varphi(\alpha,\beta,n),
$$
where the equality conditions follow from the equality conditions for (\ref{Ktest}).
\proofbox

We observe that Corollary~\ref{oppositefacets2} is a consequence of  Theorem~\ref{oppositefacets} and (\ref{phi2}).

Corollary~\ref{oppositefacets2} readily characterizes  cone volume measures of quadrilaterals.
To show that even Corollary~\ref{oppositefacets2} does not characterize cone volume measure in the plane, we consider pentagons.
Choose $u_1,\ldots,u_5\in S^1$ in this order in a way such that  $u_1=-u_4$, $\langle u_1,u_2\rangle=\frac1{\sqrt{2}}$ and
$\langle u_4,u_i\rangle=\frac2{\sqrt{5}}$ for $i=3,5$ (see Figure 1). 
\begin{figure}[h]
\centering
\includegraphics[width=7 cm]{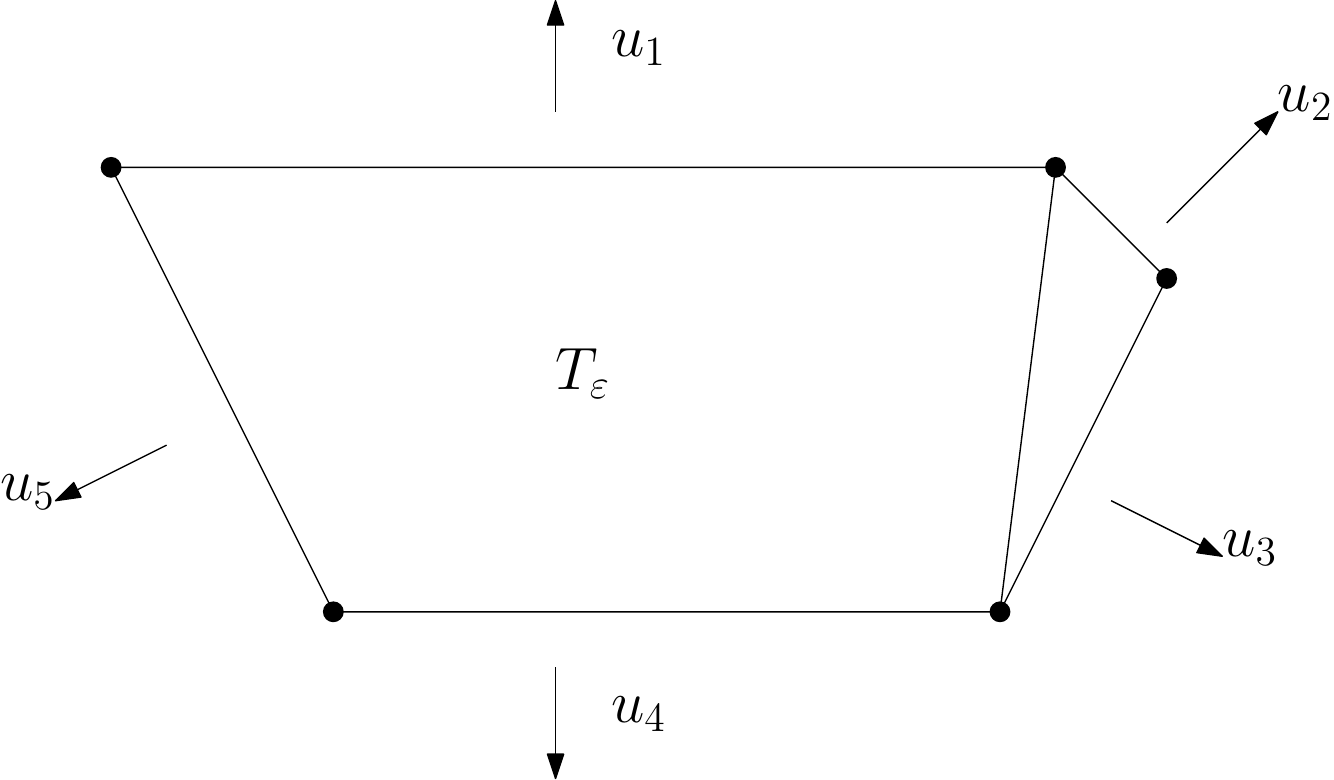}
\caption{} 
\end{figure}
Therefore the only pair of opposite vectors is  $u_1,u_4$.
In  addition, choose $\alpha,\beta\in(0,1)$ such that $\alpha>\beta$ and $\sqrt{\alpha}+\sqrt{\beta}=1$. In particular,
$\alpha+\beta>1/2$.

For all $0< \varepsilon\leq\frac12$, let $\mu_{\varepsilon}$ be the discrete probability measure concentrated on
$u_1,\ldots,u_5$ such that
\begin{eqnarray*}
\mu_{\varepsilon}(\{u_1\})&=&(1-\varepsilon)\alpha\\
\mu_{\varepsilon}(\{u_4\})&=&(1-\varepsilon)\beta\\
\mu_{\varepsilon}(\{u_i\})&=&\mbox{$\frac13$}[1-(1-\varepsilon)(\alpha+\beta)]\mbox{ \ for $i=2,3,5$}.
\end{eqnarray*}
According to Theorem~B, $\mu_{\varepsilon}$ is a cone volume measure if $\varepsilon$ is relatively large (more precisely, if $(1-\varepsilon)(\alpha+\beta)<\frac12$).

\begin{lemma}
\label{nopolygon}
There exists $\varepsilon_0>0$ such that $\mu_{\varepsilon}$ is not a cone volume measure
for $\varepsilon\in(0,\varepsilon_0)$.
\end{lemma}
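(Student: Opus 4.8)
The plan is to argue by contradiction: if $\mu_{\varepsilon}$ were a cone volume measure for a sequence $\varepsilon=\varepsilon_k\to 0^+$, then the realizing bodies would converge -- after a harmless area preserving linear distortion -- to a body that the equality case of Corollary~\ref{oppositefacets2} forces to be so rigid that it cannot carry the five atoms of $\mu_0$. So suppose $\mu_{\varepsilon_k}=V_{K_k}$ with $o\in\inte K_k$. Since $\mu_{\varepsilon_k}(S^1)=1$ we have $V(K_k)=1$, and as every $u_i$ is an atom of $\mu_{\varepsilon_k}$, each $K_k$ is a pentagon with edges $F(K_k,u_1),\dots,F(K_k,u_5)$ in this cyclic order. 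Put $a_k=h_{K_k}(u_1)$, $b_k=h_{K_k}(-u_1)$; note $V_{K_k}(\{u_1\})=(1-\varepsilon_k)\alpha\to\alpha$, $V_{K_k}(\{u_4\})=(1-\varepsilon_k)\beta\to\beta$, and $V_{K_k}(\{u_i\})\to\delta_0:=\frac13(1-(\alpha+\beta))>0$ for $i\in\{2,3,5\}$, the last being positive as $\alpha+\beta<1$.

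The main obstacle, and the next step, is a compactness statement for $\{K_k\}$. The key point is that $u_1,-u_1=u_4$ is the only antipodal pair among $u_1,\dots,u_5$. If $\operatorname{diam}K_k\to\infty$ along a subsequence, then, using $V(K_k)=1$ and the estimate $V(K_k)\ge\frac12\operatorname{diam}(K_k)\cdot(\text{width of }K_k\text{ perpendicular to a diameter})$, the body $K_k$ is squeezed into an arbitrarily thin slab about a diameter line; the two long edges of $K_k$ then have almost antipodal outer normals, which among the five fixed directions must be $u_1$ and $u_4$; hence the diameter direction tends to $u_1^\bot$, so $K_k$ lies in a slab of width tending to $0$ normal to $u_1$, i.e.\ $a_k+b_k\to0$. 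Let $u_1^*\in S^1$ be orthogonal to $u_1$, and let $\Psi_k$ be the area preserving linear map that is the identity when $\{K_k\}$ is bounded and otherwise stretches the $u_1$-coordinate by $1/(a_k+b_k)$ and shrinks the $u_1^*$-coordinate by $a_k+b_k$. Either way $\{\Psi_k K_k\}$ is bounded and contains $o$ in its interior, so along a subsequence $\Psi_k K_k\to L$ in the Hausdorff metric, with $V(L)=1$.

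Next I would check $o\in\inte L$, so that cone volume measures pass to the limit. Set $w_i^{(k)}=\Psi_k^{-\top}u_i/\|\Psi_k^{-\top}u_i\|$; since $\Psi_k$ is area preserving,
$$
h_{\Psi_k K_k}\big(w_i^{(k)}\big)\cdot\big|F\big(\Psi_k K_k,w_i^{(k)}\big)\big|=2\,V_{\Psi_k K_k}\big(\{w_i^{(k)}\}\big)=2\,V_{K_k}(\{u_i\}),
$$
which stays bounded away from $0$. If $o$ lay on $\partial L$, some facet normal $v$ of $L$ would satisfy $h_L(v)=0$; but $v=\lim w_i^{(k)}$ for some $i$, and $h_{\Psi_k K_k}(w_i^{(k)})\to h_L(v)=0$ would force $|F(\Psi_k K_k,w_i^{(k)})|\to\infty$, against boundedness. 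Hence $o\in\inte L$ and $V_{\Psi_k K_k}\to V_L$ weakly. Passing to a further subsequence so that $w_i^{(k)}\to v_i$ (with $v_1=u_1$, $v_4=u_4$, and $v_i\ne\pm u_1$ for $i\in\{2,3,5\}$), and since the masses are unchanged,
$$
V_L=\alpha\,\delta_{u_1}+\beta\,\delta_{u_4}+\delta_0\,(\delta_{v_2}+\delta_{v_3}+\delta_{v_5}).
$$
In particular $\sqrt{V_L(\{u_1\})}+\sqrt{V_L(\{u_4\})}=\sqrt{\alpha}+\sqrt{\beta}=1=\sqrt{V(L)}$, so equality holds in Corollary~\ref{oppositefacets2} for $L$ and $u_1$: $L$ is a trapezoid whose two parallel sides are perpendicular to $u_1$ and whose diagonals meet on the line $u_1^\bot$.

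To conclude, split into cases. If $v_2,v_3,v_5$ are pairwise distinct, then $V_L$ is supported on the five distinct directions $u_1,u_4,v_2,v_3,v_5$, so $L$ has five edges, impossible for a trapezoid. Otherwise some of the $v_i$ coincide, which happens only when $\Psi_k$ degenerates; then $w_i^{(k)}\to(\operatorname{sign}\langle u_i,u_1^*\rangle)\,u_1^*$, so $\{v_2,v_3,v_5\}=\{u_1^*,-u_1^*\}$ and $L$ has facet normals only $\pm u_1,\pm u_1^*$, i.e.\ $L$ is a rectangle with sides perpendicular to $u_1$ and $u_1^*$. Being also a trapezoid whose diagonals meet on $u_1^\bot$, this rectangle is symmetric about $u_1^\bot$; since $o\in u_1^\bot\cap\inte L$, the cones $[o,F(L,u_1)]$ and $[o,F(L,-u_1)]$ are reflections of one another across $u_1^\bot$, so $\alpha=V_L(\{u_1\})=V_L(\{-u_1\})=\beta$, contradicting $\alpha>\beta$. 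Either case gives a contradiction, so no such sequence $\varepsilon_k$ exists and the lemma follows. (One may alternatively read off $a_k+b_k\to0$ and $V\big(K_k\setminus\conv(F(K_k,u_1),F(K_k,u_4))\big)\le\varepsilon_k$ directly from the proof of Theorem~\ref{oppositefacets} applied with $u=u_1$.)
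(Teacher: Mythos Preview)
Your argument is essentially correct and follows a genuinely different line from the paper's. Both proofs set up the same contradiction scheme and split into the bounded--diameter and unbounded--diameter cases, but they finish the unbounded case very differently. The paper keeps the pentagons $Q_{\varepsilon_m}$ as they are and extracts a quantitative consequence of the sharpness of Corollary~\ref{oppositefacets2}: from the strict convexity of $f_{\alpha,\beta,2}$ it deduces a uniform lower bound $l_{1,\varepsilon}/l_{4,\varepsilon}>1+\omega$ on the ratio of the two parallel side lengths, and then shows that if the width $a_k+b_k$ collapses, the geometry of the remaining three edges forces $l_{1,\varepsilon}/l_{4,\varepsilon}\to 1$. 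Your route instead exploits the $\SL(2)$--equivariance of the cone volume measure: you renormalise by an area-preserving $\Psi_k$, pass to a single limit body $L$, and invoke the equality case of Corollary~\ref{oppositefacets2} to force $L$ to be a trapezoid; the contradiction then comes from counting atoms (five distinct normals in the bounded case) or from the rectangle analysis (which yields $\alpha=\beta$) in the degenerating case. Your approach is more conceptual and avoids the quantitative side-length estimate; the paper's is more elementary in that it never leaves the original pentagons and never needs to control what happens under degenerating linear maps.

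Two spots in your write-up are thinner than they should be. First, the sentence ``Either way $\{\Psi_k K_k\}$ is bounded'' needs one more line in the unbounded case: you have to check that the extent of $K_k$ in the $u_1^\ast$--direction, multiplied by $a_k+b_k$, stays bounded. This follows because the three slanted edges each have vertical span at most $a_k+b_k$ (hence length $O(a_k+b_k)$, the angles being fixed), so the width $W_k$ of $K_k$ along $u_1^\ast$ satisfies $W_k\le l_{1,\varepsilon_k}+l_{4,\varepsilon_k}+O(a_k+b_k)$, while $(l_{1,\varepsilon_k}+l_{4,\varepsilon_k})(a_k+b_k)=2V(T_{\varepsilon_k})\le 2$. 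Second, in the rectangle case your phrasing ``this rectangle is symmetric about $u_1^\bot$; since $o\in u_1^\bot\cap\inte L$'' is a little elliptic; the clean statement is that the diagonals of a rectangle meet at its centre, so the equality condition places the centre on $u_1^\bot$, whence $h_L(u_1)=h_L(-u_1)$, and since opposite sides of a rectangle have equal length this gives $V_L(\{u_1\})=V_L(\{-u_1\})$, i.e.\ $\alpha=\beta$.
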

\begin{proof} Let us suppose that for some sequence $\varepsilon_m>0$ tending to zero,
 there exists a polygon $Q_{\varepsilon_m}$ whose cone volume measure
is $\mu_{\varepsilon_m}$, and seek a contradiction.

For $\varepsilon\in\{\varepsilon_m\}$, let $l_{i,\varepsilon}$ be the length of $F(Q_\varepsilon,u_i)$, and let
$T_\varepsilon=[F(Q_\varepsilon,u_1),F(Q_\varepsilon,u_4)]$ be the trapezoid determined by the two parallel sides of $Q_\varepsilon$ (see Figure 1). Since 
$$
V(T_\varepsilon)\geq (\sqrt{(1-\varepsilon)\alpha}+\sqrt{(1-\varepsilon)\beta})^2=1-\varepsilon
$$
according to Corollary~\ref{oppositefacets2}, we deduce that
\begin{equation}
\label{restarea}
V(Q_\varepsilon\backslash T_\varepsilon)\leq \varepsilon.
\end{equation}

We claim that there exists $\omega>0$ such that if $\varepsilon\in\{\varepsilon_m\}$ is close to zero, then
\begin{equation}
\label{plussconditionl}
l_{1,\varepsilon}>(1+\omega)l_{4,\varepsilon}.
\end{equation}
We define $\varrho_\varepsilon=\sqrt{l_{1,\varepsilon}/l_{4,\varepsilon}}$, and
hence the proof of Theorem~\ref{oppositefacets} shows that
$$
V(T_\varepsilon)=f_{(1-\varepsilon)\alpha,(1-\varepsilon)\beta,2}(\varrho_\varepsilon).
$$ 
It follows from  (\ref{phi2}) that
\begin{align*}
f_{\alpha,\beta,2}(\varrho_\varepsilon)=&(1-\varepsilon)^{-1}f_{(1-\varepsilon)\alpha,(1-\varepsilon)\beta,2}(\varrho_\varepsilon) =(1-\varepsilon)^{-1}V(T_\varepsilon)\\
\leq&(1-\varepsilon)^{-1}=(1-\varepsilon)^{-1}(\sqrt{\alpha}+\sqrt{\beta})^2= (1-\varepsilon)^{-1}\varphi(\alpha,\beta,2).
\end{align*}
Since $\varrho=\sqrt[4]{\alpha/\beta}>1$ is the unique minimum point of the convex function 
$f_{\alpha,\beta,2}(\varrho)$ according to (\ref{phi2}), we deduce that 
$\varrho_\varepsilon>\sqrt[8]{\alpha/\beta}$ if $\varepsilon$ is small.
In particular, (\ref{plussconditionl}) follows as 
$l_{1,\varepsilon}/l_{4,\varepsilon}=\varrho_\varepsilon^2>1+\omega$
for $\omega=\sqrt[4]{\alpha/\beta}-1$.

Possibly decreasing $\omega>0$, we may also assume that in addition to (\ref{plussconditionl}), we have
\begin{eqnarray}
\label{plusscondition}
\mu_{\varepsilon}(\{u_i\})>&\omega &\mbox{ \ for $i=1,\ldots,5$,}\\
\label{plusscondition0}
\langle u_i,u_j\rangle>&-1+\omega &\mbox{ \  for $i,j=1,\ldots,5$, $\{i,j\}\neq \{1,4\}$.}
\end{eqnarray}

According to the Blaschke selection theorem, we may assume that
either  the diameter 
${\rm diam}Q_{\varepsilon_m}$ of $Q_{\varepsilon_m}$ tends to infinity, or $\lim_{m\to\infty}Q_{\varepsilon_m}=T$ for a polygon $T$.\\

\noindent{\bf Case 1 } $\lim_{m\to\infty}{\rm diam}Q_{\varepsilon_m}=\infty$\\
Let $v$ be a unit normal orthogonal to $u_1$, and let
$$
w_m=h_{Q_{\varepsilon_m}}(u_1)+h_{Q_{\varepsilon_m}}(u_4)
$$
be the width of $Q_{\varepsilon_m}$ in the direction of $u_1$.
We claim that
\begin{equation}
\label{wmsmall}
\lim_{m\to\infty}w_m=0.
\end{equation}
To prove (\ref{wmsmall}), let $D_m$ be a circular disc of largest radius inscribed into $Q_{\varepsilon_m}$, and let $r_m$ be the radius of $D_m$. In particular, 
\begin{description}
\item[(a)] either there exist three sides of $Q_{\varepsilon_m}$ such that their lines determine a triangle  whose inscribed circular disc is $D_m$,
\item[(b)] or there exist parallel sides of  $Q_{\varepsilon_m}$ whose lines are of distance $2r_m$.
\end{description}
Since $V(Q_{\varepsilon_m})=1$ and $\lim_{m\to\infty}{\rm diam}Q_{\varepsilon_m}=\infty$, we have
$\lim_{m\to\infty}r_m=0$. If (a) holds,  then the exterior unit normals of the three sides are among $u_1,\ldots,u_5$, and hence (\ref{plusscondition0}) yields a constant $c>0$ depending on $u_1,\ldots,u_5$ such that 
${\rm diam}Q_{\varepsilon_m}\leq cr_m$.
It follows from $\lim_{m\to\infty}{\rm diam}Q_{\varepsilon_m}=\infty$ that (b) holds for large $m$, which in turn implies (\ref{wmsmall}). Naturally, the two parallel sides are
$F(Q_\varepsilon,u_1)$ and $F(Q_\varepsilon,u_4)$.

 Since $|\langle v,u_i\rangle|\geq \frac{\sqrt{3}}2$ for $i=2,3,5$, we deduce that
$$
l_{i,\varepsilon_m}<\mbox{$\frac2{\sqrt{3}}w_m$ for $i=2,3,5$},
$$
which in turn yields {\it via} the triangle inequality that
\begin{equation}
\label{l1l4diff}
|l_{1,\varepsilon_m}-l_{4,\varepsilon_m}|<\mbox{$\frac6{\sqrt{3}}w_m$}.
\end{equation}
However, $V(T_{\varepsilon_m})=\frac{w_m}2(l_{1,\varepsilon_m}+l_{4,\varepsilon_m})$ and
$1-\varepsilon_m\leq V(T_{\varepsilon_m})<1$, which combined with (\ref{wmsmall}) and (\ref{l1l4diff}) imply
$$
\lim_{m\to\infty}\frac{l_{1,\varepsilon_m}}{l_{4,\varepsilon_m}}=1.
$$
The last formula contradicts (\ref{plussconditionl}). \\

\noindent{\bf Case 2 } $\lim_{m\to\infty}Q_{\varepsilon_m}=T$ for a polygon $T$\\
Since $\lim_{m\to\infty}V(Q_{\varepsilon_m}\backslash T_{\varepsilon_m})=0$ by (\ref{restarea}), we have
$$
T=\lim_{m\to\infty}Q_{\varepsilon_m}=\lim_{m\to\infty}T_{\varepsilon_m}.
$$
Therefore $T$ is  a trapezoid. It follows that there is a $u_j$ not contained in ${\rm supp}V_T$, and let $f$ be a continuous function on $S^1$ satisfying $f(u_j)=1$ and $f(u)=0$ for $u\in {\rm supp}V_T$. As $V_{Q_{\varepsilon_m}}$ tends weakly to $V_T$ by the weak continuity of the cone volume measure (see R. Schneider \cite{Sch93}),  we conclude from (\ref{plusscondition}) that
$$
0=\int_{S^1}f\,dV_T=\lim_{m\to\infty}\int_{S^1}f\,dV_{Q_{\varepsilon_m}}\geq \omega.
$$
This contradiction finally verifies Lemma~\ref{nopolygon}.
\end{proof}

\section{Estimates in higher dimensions}
\label{sec3}

For $n\geq 3$, we do not expect a close formula for the expressions $\varphi(\alpha,\beta,n)$ and $\varrho_0(\gamma,n)$ playing crucial roles in Theorem~\ref{oppositefacets}. Therefore we collect some additional properties besides (\ref{philow}).
In this section, the implied constant in $O(\cdot)$ depends on $n$. First we prove the estimates for 
$\varrho_0(\frac{\alpha}{\beta},n)$.\\

\noindent{\bf Proof of Theorem~\ref{rho0}: } Let us recall that $\gamma>1$. We breviate $f_{\gamma,1,n}(\varrho)$ (see (\ref{fdef})) as $f_\gamma(\varrho)$, and hence $\varrho_0(\gamma,n)$ is a root of the equation
\begin{equation}
0=f'_\gamma(\varrho)=\frac{2}{\varrho}\cdot
\sum_{i=1}^{n-1}i\left(\varrho^{2i}-\frac{\gamma}{\varrho^{2i}}\right).
\end{equation}
We see that the sign of $f'_\gamma(\varrho)$ is that of
\begin{equation}
\label{fder}
h(\varrho)=\sum_{i=1}^{n-1}i\left(\frac{\varrho^{2i}}{\gamma^{1/2}}-\frac{\gamma^{1/2}}{\varrho^{2i}}\right).
\end{equation}
Since $h(\varrho)$ is strictly increasing for $\varrho\geq 1$, it is negative for $\varrho=1$ and tends to infinity as $\varrho$ tends to infinity, $\varrho_0(\gamma,n)$ is the unique root of (\ref{fder}). Let
$$
\mbox{$\tau(\gamma)=\frac{\log \varrho_0(\gamma,n)}{\log \gamma}$, and hence $\varrho_0(\gamma,n)=\gamma^{\tau(\gamma)}$}.
$$

As general estimates, we prove that if $\gamma>1$, then
\begin{equation}
\label{taulowupp}
\mbox{$\frac{3}{4(2n-1)}<\tau(\gamma)<\frac1{2n}$}.
\end{equation}
First, we observe that if $\varrho=\gamma^{\frac1{2n}}$, and $1\leq j<n/2<i$ such that $2i-n=n-2j$, then for
$$
t=\frac{\varrho^{2i}}{\gamma^{\frac12}}=\frac{\gamma^{\frac12}}{\varrho^{2j}}>1,
$$
we have
\begin{equation}
\label{ij}
j\left(\frac{\varrho^{2j}}{\gamma^{1/2}}-\frac{\gamma^{1/2}}{\varrho^{2j}}\right)+
i\left(\frac{\varrho^{2i}}{\gamma^{1/2}}-\frac{\gamma^{1/2}}{\varrho^{2i}}\right)=
\left(\frac{j}{t}-jt+it-\frac{i}{t}\right)=\frac{(i-j)(t^2-1)}{t}>0.
\end{equation}
Summing (\ref{ij}) for all pairs $\{j=n-i,i\}$ for $i=\lceil \frac{n+1}2\rceil,\ldots,n-1$ yields that
$h(\varrho)>0$ for $\varrho=\gamma^{\frac1{2n}}$.
As $h(\varrho)$ is strictly increasing, we conclude the upper bound in (\ref{taulowupp}).

Establishing the lower bound is more complicated. Put $\tau(\gamma)=\frac{3}{4(2n-1)}$ and $\varrho=\gamma^{\frac{3}{4(2n-1)}}$. We prove that $h(\varrho)<0$ in this case, it implies the lower bound as $h$ is increasing.

Define $s(x)=\gamma^{x}-\gamma^{-x}.$ It is a strictly increasing odd function, convex for $x\in[0,\infty)$. Hence we have the following two properties:
\begin{enumerate}
  \item[(*)] $s(a)+s(b)>s(a+y)+s(b-y)$ for $0\le a<a+y\le b-y<b$;
  \item[(**)] $s(b)/b< s(a)/a$ in other words $s(b)< \frac{b}{a}s(a)$ for $0<a<b$.
\end{enumerate}

Using this functional notation, we may rewrite
\begin{equation}\label{sumis}
h(\varrho)=\sum_{i=1}^{n-1}i s(2i\tau-1/2)=\sum_{i=1}^{n-1}i s\left(\frac{3i-(2n-1)}{4n-2}\right).
\end{equation}

Why this particular $\tau$ is the cut-off point is illustrated by the fact that the mean value of the $\frac{n(n-1)}{2}$ arguments in the above sum is
\[\frac{\frac{1}{4n-2}\left(3\frac{(n-1)n(2n-1)}{6}-(2n-1)\frac{n(n-1)}{2}\right)}{\frac{n(n-1)}{2}}=0.
\]

We split the summands in \eqref{sumis} into three groups $\mathcal{A,B,C}$. For $s(x)$ in the sum let $x\in\mathcal{A}\Leftrightarrow x< \frac{2-n}{4n-2}$, $x\in \mathcal{B}\Leftrightarrow \frac{2-n}{4n-2}\leq x\le 0$ and $x\in \mathcal{C}\Leftrightarrow 0<x$. Consequently the index (and multiplicity) $i$ in the sum \eqref{sumis} defines a summand in the middle group if $\frac{n+1}{3}\leq i\le\frac{2n-1}{3}$.

We shall prove that $h(\varrho)$ is negative, using the inequalities (*), (**) above. First we use (*) to cancel the negative terms $s(x)$ for $x\in\mathcal B$ thus leaving only negative arguments with absolute value at least $\frac{n-2}{4n-2}$. The positive arguments however are all at most $\frac{n-2}{4n-2}$, hence  we can use (**) to change every summand into $s(\frac{n-2}{4n-2})$, and it will have coefficient $0$.

We split the proof into three cases depending on the residue of $n$ mod $3$. The case when $3\mid 2n-1$ is the simplest and illustrates the two other cases. Here $x\in\mathcal{B}$ if $x(4n-2)\in\{-n+2,-n+5,\ldots,-3,0\}$ and $x\in\mathcal{B}\Rightarrow -x\in\mathcal{C}$. In the sum \eqref{sumis} each of the summands $s(x)\;(x\in\mathcal{B})$ occurs with smaller multiplicity than $s(-x)=-s(x)$. So after canceling them for every negative term $s(x)$ left in the sum we have $x\le\frac{-n-1}{4n-2}$. In particular, we may write
\begin{equation*}
h(\varrho)=\sum_{i=1}^{\frac{n-2}{3}}i s\left(\frac{3i-(2n-1)}{4n-2}\right)+\sum_{i=\frac{2n+2}{3}}^{n-1}\left(i-\left(\frac{4n-2}{3}-i\right)\right) s\left(\frac{3i-(2n-1)}{4n-2}\right).
\end{equation*}

For each of these negative terms  we use (**) (for $-x$) to get $s(x)\le\frac{x(4n-2)}{n-2} s(\frac{n-2}{4n-2})$. For each of the positive terms $s(y)$ we have $y\leq \frac{n-2}{4n-2}$ so similarly $s(y)\le \frac{y(4n-2)}{n-2} s(\frac{n-2}{4n-2})$.

Neither the cancelations, nor these approximations changed the weighted mean of the arguments, it is still $0$. But now all the arguments are equal, so the sum of the coefficients is $0$. Putting these approximations into \eqref{sumis} we conclude that $h(\varrho)<0$.

Let now $n-1$ be divisible by $3$. Then $x\in\mathcal{B}$ if $x(4n-2)\in\{3-n,\ldots, -1\}$ and $x\in\mathcal(B)\Rightarrow -x+\frac{1}{4n-2}\in\mathcal{C}$. For each $x\in\mathcal{B}$ we pick another negative summand $s(z)$ (for $z\leq \frac{1-n}{4n-2}$) and use the first approximation to get $s(x)+s(z)<s(x-\frac{1}{4n-2})+s(z+\frac{1}{4n-2})$. With sufficiently many steps we replace each $s(x)$ by $s(x-\frac{1}{4n-2})$ for $x\in\mathcal{B}$. So that these cancel with summands $s(y)$ where $y\in\mathcal{C}$. We can conclude the same way as above if we can make sure that there exists always a suitable $z$ until we replace all summands $s(x)$. The number of such summands in $\mathcal{B}$ is $\frac{n+2}{3}+\frac{n+5}{3}+\cdots+\frac{2n-2}{3}=\frac{n(n-1)}{6}$. The amount by which we can increase (until $\frac{2-n}{4n-2}$) those in $\mathcal{A}$ is
\[\sum_{i=1}^\frac{n-1}{3} i(2n-1-3i-(n-2))=(n+1)\frac{(n-1)(n+2)}{18}-\frac{(n-1)(n+2)(2n+1)}{54}.
\]
Comparing it with $\frac{n(n-1)}{6}$ we get
\[\frac{n(n-1)}{6}\le (n+1)\frac{(n-1)(n+2}{18}-\frac{(n-1)(n+2)(2n+1)}{54}=(n-1)\frac{(n+2)^2}{54}.
\]
This inequality can be written in the form
\[0\le (n+2)^2-9n=n^2-5n+4=(n-4)(n-1),
\]
which holds for $n\ge 4$ as required.

For the last case let $3$ divide $n$. Then $x\in\mathcal{B}$ if $x(4n-2)\in\{4-n,\ldots, -2\}$ and $x\in\mathcal(B)\Rightarrow -x+\frac{2}{4n-2}\in\mathcal{C}$. The multiplicity of $s(\frac{-2}{4n-2})$ is $\frac{2n-3}{3}$ and that of $s(\frac{1}{4n-2})$ is $\frac{2n}{3}$. First we cancel $\frac{n}{3}$ of these by \[\frac{n}{3}s\left(\frac{-2}{4n-2}\right)+\frac{2n}{3}s\left(\frac{1}{4n-2}\right)<
\frac{2n}{3}\left(s\left(\frac{-1}{4n-2}\right)+s\left(\frac{1}{4n-2}\right)\right)=0.\] For $n=3$ there is nothing more to do. So assume $n\geq 6$. For each of the remaining $s(x)$ ($x\in\mathcal{B}$) we pick two other (possibly equal) negative summands $s(z_1),\,s(z_2)$ (for $z_1,\,z_2\in \mathcal{A}$) and use (**) twice to get \[s(x)+s(z_1)+s(z_2)<s\left(x-\frac{2}{4n-2}\right)+s\left(z_1+\frac{1}{4n-2}\right)+s\left(z_2+\frac{2}{4n-2}\right).\] With sufficiently many steps we replace each $s(x)$ by $s(x-\frac{2}{4n-2})$ for $x\in\mathcal{B}$. So that these cancel with summands $s(y)$ where $y\in\mathcal{C}$. We can conclude the same way as above if we can make sure that there exists always a suitable $z$ until we replace all summands $s(x)$. The number of such summands is $\frac{n+3}{3}+\frac{n+6}{3}+\cdots+\frac{2n-3}{3}-\frac{n}{3}=\frac{n(n-5)}{6}$. The amount by which we can decrease (until $2-n$) those in $\mathcal{A}$ is
\[\sum_{i=1}^\frac{n}{3} i(n+1-3i)=(n+1)\frac{n(n+3)}{18}-\frac{n(n+3)(2n+3)}{54}.
\]

Comparing it with $2\frac{n(n-5)}{6}$ we get
\[\frac{n(n-5)}{3}\le \frac{(n+3)n^2}{54}.
\]
This inequality can be written in the form
\[0\le n^2-15n+90,
\]
which always holds. With this we have finished the proof of  (\ref{taulowupp}).

To have a polynomial equation for $\varrho_0(\gamma,n)$, we use the formula
\begin{equation}
\label{ysum}
\sum_{i=1}^{n-1}iy^i=\frac{y}{(y-1)^2}\,((n-1)y^n-ny^{n-1}+1)
\end{equation}
for $y\neq 1$, which can be obtained by derivating $\frac{1-y^n}{1-y}$ for $y>1$. It follows from (\ref{fder}), (\ref{ysum}) and $\frac{y}{(y-1)^2}=\frac{1/y}{((1/y)-1)^2}$ that
if $\varrho=\varrho_0(\gamma,n)$, then
\begin{align}
\nonumber
0&=\frac{\varrho^2}{(\varrho^2-1)^2}\,\left[(n-1)\varrho^{2n}-n\varrho^{2(n-1)}+1\right]-
\frac{\gamma\cdot\varrho^{-2}}{(\varrho^{-2}-1)^2}\,\left[(n-1)\varrho^{-2n}-n\varrho^{-2(n-1)}+1\right]\\
\label{rhopolynomial}
&=\frac{\gamma^{\frac12}\varrho^2}{(\varrho^2-1)^2}\,\left[
(n-1)s(2n\tau-1/2)-ns(2(n-1)\tau-1/2)-s(1/2)   \right].
\end{align}

First we assume that $\gamma>1$ tends to one, and hence $\gamma=1+\varepsilon$ where  $\varepsilon>0$ tends to zero.
For fixed $r\in [-1,1]$ and  small $\varepsilon>0$, the binomial series implies
\begin{equation}
\label{binom}
s(r)=(1+\varepsilon)^r-(1+\varepsilon)^{-r}=
r(2\varepsilon-\varepsilon^2+\mbox{$\frac23$}\,\varepsilon^3)+r^3\cdot\mbox{$\frac23$}\,\varepsilon^3 +O(\varepsilon^4).
 \end{equation}
We apply (\ref{binom}) for  $r=2n\tau-\frac12$, $r=2(n-1)\tau-\frac12$ and $r=-\frac12$, respectively.
Since
$$
(n-1)\left(2n\tau-\frac12\right)-n\left(2(n-1)\tau-\frac12 \right)-\frac12=0,
$$
and
\begin{align*}
(n-1)\left(2n\tau-\frac12\right)^3-n\left(2(n-1)\tau-\frac12 \right)^3-\left(\frac12\right)^3&=\\
\tau^2(6(n-1)n^2-6n(n-1)^2)-\tau^3(8(n-1)n^3-8n(n-1)^3))&=\\
2n(n-1)\tau^2\left(3-\tau(8n-4)\right),&
\end{align*}
we deduce by (\ref{binom})  that
$$
2n(n-1)\tau^2\left(3-\tau(8n-4)\right)\cdot\mbox{$\frac23$}\,\varepsilon^3 =O(\varepsilon^4).
$$
In turn, (\ref{taulowupp}) yields that if $\varepsilon>0$ is small, then
\begin{equation}
\label{gamma1tau}
\tau(1+\varepsilon)=\frac{3}{8n-4}+O(\varepsilon).
\end{equation}

Next let $\gamma$ tend to infinity. It follows from (\ref{rhopolynomial}) that for $\varrho=\varrho_0(\gamma,n)$, we have
$$
(n-1)\varrho^{2n}-n\varrho^{2(n-1)}=\gamma+(n-1)\gamma\varrho^{-2n}-n\gamma\varrho^{-2(n-1)}-1.
$$
As $\varrho>\gamma^{\frac1{4n}}$ by (\ref{taulowupp}), we deduce that
\begin{equation}
\label{rhogammainfty}
\varrho_0(\gamma,n)=(n-1)^{\frac{-1}{2n}}\gamma^{\frac1{2n}}\left(1+O\left(\varrho_0(\gamma,n)^{-2}\right) \right)
=(n-1)^{\frac{-1}{2n}}\gamma^{\frac1{2n}}\left(1+O\left(\gamma^{-\frac1{2(n-1)}}\right) \right).
\end{equation}
In particular, $\lim_{\gamma\to \infty}\tau(\gamma)=\frac1{2n}$ and $\lim_{\gamma\to 1^+}\tau(\gamma)=\frac3{8n-4}$.
\proofbox

Next prove the bounds for $\varphi(\alpha,\beta,n)$ if $n\geq 3$.\\

\noindent{\bf Proof of Corollary~\ref{phinbig}: }
We use the notation of the proof of  Theorem~\ref{rho0}. The lower bound in (i) is just (\ref{philow}).

For the upper bound in (i), we may asume that $\alpha>\beta$. We have
\begin{equation}
\label{fform}
f_{\alpha,\beta,n}(\varrho)=\alpha+\beta+\frac{\alpha(1-\varrho^{-2n+2})+\beta(\varrho^{2n}-\varrho^2)}{\varrho^2-1}.
\end{equation}
Defining $\varrho>0$ by $\varrho^{2n}=\alpha/\beta$, the upper  bound in (i) is a consequence of
$$
\frac{\alpha(1-\varrho^{-2n+2})+\beta(\varrho^{2n}-\varrho^2)}{\varrho^2-1}< 2(n-1)\alpha^{\frac{n-1}n}\beta^{\frac1n}.
$$
Dividing by $\beta$,  it is sufficient to prove that if $\varrho>1$, then
\begin{equation}
\label{uppfirst}
\varrho^{2n}(1-\varrho^{-2n+2})+\varrho^{2n}-\varrho^2< 2(n-1)\varrho^{2n-2}(\varrho^2-1),
\end{equation}
which is  equivalent with
$$
(n-1)\varrho^{2n-2}<(n-2)\varrho^{2n}+\varrho^2.
$$
Since the last inequality holds for $\varrho>1$ by the inequality between arithmetic and geometric mean, we conclude (\ref{uppfirst}), and in turn (i).

If $\frac{\alpha}{\beta}=1+\varepsilon$ for small $\varepsilon>0$, then for $r\in(0,1)$, we have the formula
$$
(1+\varepsilon)^r+(1+\varepsilon)^{-r}=2+r^2\varepsilon^2+O(\varepsilon^3).
$$
 We substitute $\varrho=(1+\varepsilon)^\tau$ for $\tau=\frac{3}{8n-4}+O(\varepsilon)$ given by (\ref{gamma1tau}) into (\ref{fdef}).  For $i=1,\ldots,n-1$, we have
\begin{align*}
\alpha\varrho^{-2i}+\beta\varrho^{2i}&=\sqrt{\alpha\beta}
\left((1+\varepsilon)^{\frac12-2i\tau}+(1+\varepsilon)^{-(\frac12-2i\tau)}\right)=
\sqrt{\alpha\beta}\left(2+(\mbox{$\frac12$}-2i\tau)^2\varepsilon^2+O(\varepsilon^3)\right)\\
&=\sqrt{\alpha\beta}\left(2+(\mbox{$\frac12-\frac{3i}{4n-2}$})^2\varepsilon^2+O(\varepsilon^3)\right).
\end{align*}
Since
$$
\sum_{i=1}^{n-1}\left(\frac12-\frac{3i}{4n-2}\right)^2=\frac{n-1}4-\frac{3n(n-1)}{8n-4}+\frac{9n(n-1)(2n-1)}{6(4n-2)^2}=
\frac{(n-1)(n-2)}{8n-4},
$$
we conclude the formula in (ii).

Let us assume that $\gamma=\frac{\alpha}{\beta}$ is large. We deduce by (\ref{fdef}), (\ref{rhogammainfty})
and (\ref{fform}) that
for $\varrho=\varrho_0(\gamma,n)$, we have
\begin{align*}
f_{\alpha,\beta,n}(\varrho)&
=\alpha+\beta+\frac{\alpha }{\varrho^2}+
\frac{\beta \gamma}{(n-1)\varrho^2}+O\left(\frac{\alpha}{\varrho^4}\right)
=\alpha+\beta+\frac{\alpha n}{(n-1)\varrho^2}+O\left(\frac{\alpha}{\varrho^4}\right)\\
&=\alpha+\beta+\frac{\alpha n}{(n-1)^{\frac{n-1}n}\gamma^{\frac1n}}+O\left(\frac{\alpha}{\gamma^{\frac2n}}\right).
\end{align*}
\proofbox

\noindent{\bf Acknowledgement } We thank Gaungxian Zhu for useful discussions, and correcting various mistakes in an earlier version, and the unknown referee for many improvements. First named author is supported by OTKA 109789, and the second named author is supported by OTKA 84233.

\end{document}